\newtheorem{te}{Theorem}[section]
 \newtheorem{lm}{Lema}[section] 
\begin{document}

\noindent

 \title[ I. The Poincar\'e series]{  Linear locally nilpotent derivations and the classical invariant theory, I:  The Poincar\'e series }

\author{Leonid Bedratyuk}\address{Khmelnitskiy national university, Insituts'ka, 11,  Khmelnitskiy, 29016, Ukraine}

\begin{abstract} 
  By using classical invariant theory approach  a formulas for computation of the Poincar\'e series   of the kernel of   linear locally nilpotent derivations is found. 

\end{abstract}
\maketitle

\section{Introduction} 

  Let $\mathbb{K}$ be a field of characteristic 0. A derivation $\mathcal{D}$
of the polynomial algebra $\mathbb{K}[\mathcal{Z}_n],$ \\ ${\mathcal{X}_n= \{z_1,z_2,\ldots,z_n \}}$
is called a linear  derivation if 
$$
\mathcal{D}(z_i)=\sum_{j=1}^{n} a_{i,j} z_j, a_{i,j} \in \mathbb{K}, i=1,\ldots, n.
$$
If the matrix $A_{\mathcal{D}}:=\{ a_{i,j} \},$ $ i,j=1,\ldots, n $ is  nilpotent one  then the linear derivation is called  a Weitzenb\"ock derivation. The Weitzenb\"ock derivation is a locally nilpotent derivation of  $\mathbb{K}[z_1,z_2,\ldots,z_n].$
Any   Weitzenb\"ock derivation $\mathcal{D}$ is  completely determined by   Jordan normal form of the matrix $A_{\mathcal{D}}.$ Denote by  $\mathcal{D}_{\mathit{\mathbf{d}}},$  $\mathit{\mathbf{d}}:=(d_1,d_2,\ldots, d_s)$   a  Weitzenb\"ok derivation with   Jordan normal form of $A_{\mathcal{\mathcal{D}_{\mathit{\mathbf{d}}}}}$ which consists of  $s$  Jordan blocks of sizes  $d_1+1,$ $d_2+1,$ $\ldots, d_s+1.$ À Weitzenb\"ock derivation which determined by unique Jordan block of size $d+1$ is called the basic Weitzenb\"ock derivation and denoted by $\Delta_d.$

The algebra  
$$
{\rm{ker}}\, D_{\mathit{\mathbf{d}}}=\left\{ f \in \mathbb{K}[\mathcal{Z}_n]| \mathcal{D}_{\mathit{\mathbf{d}}}(f)=0 \right\},
$$
is called the kernel  of the derivation  $D_{\mathit{\mathbf{d}}}.$ 
It is well known that the kernel  $\ker \mathcal{D}_{\mathit{\mathbf{d}}}$  is a finitely generated algebra, see \cite{Wei}--\cite{Tyc}.
 However, it remained an open problem to find a minimal system of homogeneous
generators (or even the cardinality of such a system) of the algebra $\rm{ker}\,D_{\mathit{\mathbf{d}}}$ even  for small sets ${\mathit{\mathbf{d}}}.$

On the other hand, the problem to  describe of the kernel $\ker \mathcal{D}_{\mathit{\mathbf{d}}}$  can be reduced to   an old  problem of the classical invariant theory, namely to the problem to describe  of the algebra of joint covariants of several  binary forms.

It is well known that there is a one-to-one correspondence between  $\mathbb{G}_a$-actions on an affine algebraic variety $V$   and locally nilpotent  $\mathbb{K}$-derivations on its algebra of polynomial functions. Let us identify the algebra  $\mathbb{K}[\mathcal{Z}_n]$  with the algebra  $\mathcal{O}[\mathbb{K}^{n}]$ of polynomial functions of the algebraic variety   $\mathbb{K}^{n}.$    Then, the kernel of the derivation   $\mathcal{D}_{\mathit{\mathbf{d}}}$ coincides with
the invariant ring of the induced via $\exp(t \, \mathcal{D}_{\mathit{\mathbf{d}}})$ action:
$$
\ker \mathcal{D}_{\mathit{\mathbf{d}}}= \mathbb{K}[\mathcal{Z}_n]^{\mathbb{G}_a} \cong \mathcal{O}(\mathbb{K}^n)^{\mathbb{G}_a}.
$$

Now, let $B_{d_1}, B_{d_2},\ldots B_{d_s} $ be the  vector $\mathbb{K}$-spaces of  binary forms of degrees $d_1,d_2,\ldots, d_s$ endowed with the natural action of the group  $SL_2.$ Consider the induced action of the group  $SL_2$ on the algebra of polynomial functions   $\mathcal{O}[B_{\mathbf{d}} \oplus \mathbb{K}^2 ]$   on the vector  space $B_{\mathbf{d}} \oplus \mathbb{K}^2,$ where $$B_{\mathbf{d}}:=B_{d_1}\oplus B_{d_2}\oplus \ldots \oplus B_{d_s}, \dim( B_{\mathbf{d}})=d_1+d_2+\ldots+d_s+s.$$ 
Let  $U_2$ be the maximal unipotent subgroup of the group  $SL_2.$  The application of the Grosshans principle, see  \cite{Gross}, \cite{Pom}  gives

$$
\mathcal{O}[B_{\mathbf{d}} \oplus \mathbb{K}^2 ]^{\,SL_2}\cong \mathcal{O}[B_{\mathbf{d}} ]^{\,U_2}.
$$
Thus
$$
\mathcal{O}[B_{\mathbf{d}} \oplus \mathbb{K}^2 ]^{\,\mathfrak{sl_2}}\cong \mathcal{O}[B_{\mathbf{d}} ]^{\,\mathfrak{u_2}}.
$$
Since $U_2 \cong (\mathbb{K},+)$ and $\mathbb{K}z_1\oplus \mathbb{K}z_2 \oplus \ldots \oplus \mathbb{K}z_n \cong  B_{\mathbf{d}}$  it follows  $$\ker  \mathcal{D}_{\mathit{\mathbf{d}}} \cong \mathcal{O}[B_{\mathbf{d}} \oplus \mathbb{K}^2 ]^{\,\mathfrak{sl_2}}.$$

In the language  of classical invariant theory   the algebra    $\mathcal{C}_{\mathbf{d}}:=\mathcal{O}[B_{\mathbf{d}} \oplus \mathbb{K}^2 ]^{\,\mathfrak{sl_2}}$ is called     the  algebra of joint covariants for $s$  binary forms,  the algebra    ${\mathcal{S}_{\mathbf{d}}:=\mathcal{O}[B_{\mathbf{d}} ]^{\,\mathfrak{u_1}}}$ is called     the  algebra of joint semi-invariants for   binary forms  and the algebra $\mathcal{I}_{\mathbf{d}}:=\mathcal{O}[B_{\mathbf{d}} ]^{\,\mathfrak{sl_2}}$  is called     the  algebra of invariants  for   binary forms of  degrees $d_1,$ $d_2,\ldots, d_s.$ 
The algebras of joint covariants of the binary forms   were an  object of research in the classical invariant theory
of the 19th century.

The reductivity of  $SL_2$  implies that the algebras  $\mathcal{I}_{\mathbf{d}},$  $\mathcal{S}_{\mathbf{d}}\cong \ker \mathcal{D}_{\mathbf{d}},$   are  finitely  generated $\mathbb{Z}$-graded algebras.
The formal power series  $\mathcal{PI}_{\mathbf{d}}, \mathcal{PD}_{\mathbf{d}}= \mathcal{PS}_{\mathbf{d}} \in \mathbb{Z}[[z]],$
$$
\mathcal{PI}_{\mathbf{d}}(z)=\sum_{i=0}^{\infty }\dim((\mathcal{I}_{{\mathbf{d}}})_i) z^i,  \mathcal{PS}_{{\mathbf{d}}}(z) =\sum_{i=0}^{\infty }\dim((\mathcal{S}_{{\mathbf{d}}})_i) z^i,
$$ 
are  called the Poincar\'e series   of the algebras of  joint   invariants and semi-invariants.
 The finitely generation of the algebras  $\mathcal{I}_{\mathbf{d}}$ and $\mathcal{S}_{\mathbf{d}}$ implies  that their  Poincar\'e series are   expansions  of certain  rational functions.  We consider here the problem of
computing efficiently these rational functions. It can  be the first step in describing these algebras.

 Let us recall that the Poincar\'e series of the algebra of covariants for binary form of degree $d$ equals the Poincar\'e series of kernel  of  the basic Weitzenb\"ock derivation $\Delta_d.$
For  the cases  $d\leq 10,$ $d=12$ the  Poincar\'e series of the algebra of  invariants and covariants for the  binary $d$-form   were calculated by Sylvester and  Franklin, see  \cite{SF}, \cite{Sylv-12}. To do so, they used the Sylvester-Cayley formula for the dimension of graded subspaces.
 In  \cite{Ono} the Poincar\'e  series for  $\Delta_5$ was rediscovered. Springer \cite{SP} derived  the  formula for computing the Poincar\'e  series of the algebras of invariants of the binary $d$-forms.This formula has been used by Brouwer and Cohen  \cite{BC} for  the Poincar\'e  series calculations in the cases  $d\leq 17$ and also by Littelmann and Procesi  \cite{LP} for even  $d\leq 36.$ For the case $d\leq 30$  in   \cite{BI} the explicit form of the Poincare series is given.

In  \cite{BC1}, \cite{BC2}  we have found Sylvester-Cayley  type and Springer type  formulas for the basic  derivation $\Delta_d$  and for the derivation   $\mathcal{D}_{{\mathbf{d}}}$ for ${\mathbf{d}}=(d_1,d_2).$ Also, for those derivations the Poincar\'e series was found for $d,d_1,d_2 \leq 30.$
Relatively recently, in \cite{Dr_G}  the formula for computing the Poincare series of  the Weitzenb\"ock derivation  $\mathcal{D}_{{\mathbf{d}}}$ for arbitrary ${\mathbf{d}}$ was  announced.

 In this paper we  have  given Sylvester-Cayley  type formulas  for calculation of $\dim((\mathcal{I}_{{\mathbf{d}}})_i,$  $\dim (\ker  \mathcal{D}_{\mathbf{d}})_i$ and, Springer-type formulas  for calculation of $\mathcal{PI}_{\mathbf{d}}(z),$ $\mathcal{PD}_{\mathbf{d}}(z)= \mathcal{PS}_{\mathbf{d}}(z)$ for arbitrary $\mathbf{d}.$ Also, for the cases $\mathbf{d}=(1,1,\ldots,1),$ $\mathbf{d}=(2,2,\ldots,2)$ the explicit formulas for $\mathcal{PI}_{\mathbf{d}}(z),$ $\mathcal{PD}_{\mathbf{d}}(z)$  are  given.

\section{Sylvester-Cayley type formula  for the kernel }

 To begin with, we give a proof of the Sylvester-Cayley type  formula for ðîçì³ðíîñò³ ãðàäóéîâàíèõ ï³äïðîñòîð³â îô  the kernel of   Weitzenb\"ock derivations $\mathcal{D}_{\mathit{\mathbf{d}}},$ $ \mathbf{d}: =(d_1, d_2, \ldots ,d_s )$.

Let us consider the polynomial algebra    $\mathbb{K}[X_{\mathbf{d}}]$ generated by the set of variables   $$
X_{\mathbf{d}}:=\left \{x^{(1)}_{0},x^{(1)}_{1},\ldots, x^{(1)}_{d_1},x^{(2)}_{0},x^{(1)}_{2},\ldots, x^{(2)}_{d_2}, \ldots x^{(s)}_{0},x^{(s)}_{1},\ldots, x^{(s)}_{d_s} \right \}.
$$

Define on    $\mathbb{K}[X_{\mathbf{d}}]$ the linear nilpotent derivation     $\mathcal{D}_{\mathit{\mathbf{d}}},$ $ \mathbf{d}: =(d_1, d_2, \ldots ,d_s )$     by 
$$
\mathcal{D}_{\mathit{\mathbf{d}}}(x_i^{(k)})=i \, x_{i-1}^{(k)}, k=1,\ldots,s.
$$
Also, define on   $\mathbb{K}[X_{\mathbf{d}}]$  two linear derivations    $\mathcal{D}_{\mathit{\mathbf{d}}}^*$ and $\mathcal{E}_{\mathit{\mathbf{d}}},$ by 
$$
\mathcal{D}^*_{\mathit{\mathbf{d}}}(x_i^{(k)})=(d_k-i) \, x_{i+1}^{(k)}, \mathcal{E}_{\mathit{\mathbf{d}}}(x_i^{(k)})=(d_k-2\,i) \, x_{i}^{(k)}, k=1,\ldots,s.
$$
The linear locally nilpotent derivation $\mathcal{D}^*_{\mathit{\mathbf{d}}}$ is said to be  {\it the dual } derivation with respect to the derivation $\mathcal{D}_{\mathit{\mathbf{d}}}.$

By direct calculation we get 
$$
\left[\mathcal{D}_{\mathit{\mathbf{d}}},\mathcal{D}_{\mathit{\mathbf{d}}}^*\right]( x_{i}^{(k)})=\mathcal{D}_{\mathit{\mathbf{d}}}\left(\mathcal{D}^*_{\mathit{\mathbf{d}}}\left(x_i^{(k)}\right)\right)-\mathcal{D}^*_{\mathit{\mathbf{d}}}\left(\mathcal{D}_{\mathit{\mathbf{d}}}\left(x_i^{(k)}\right)\right)=(d_k-2i)\, x_{i}^{(k)}=\mathcal{E}_{\mathit{\mathbf{d}}}( x_{i}^{(k)}).
$$
In the same way we get  $[\mathcal{D}_{\mathit{\mathbf{d}}},\mathcal{E}_{\mathit{\mathbf{d}}}]=-2 \mathcal{D}_{\mathit{\mathbf{d}}}$ and $[\mathcal{D}^*_{\mathit{\mathbf{d}}},\mathcal{E}_{\mathit{\mathbf{d}}}]=2\mathcal{D}^*_{\mathit{\mathbf{d}}}.$ Therefore, the polynomial algebra $\mathbb{K}[X_{\mathbf{d}}]$ considered as a vector space   becomes  a  $\mathfrak{sl_
{2}}$--module. The basis elements    $ \left( \begin{array}{ll}  0\, 1 \\ 0\,0 \end{array} \right),$ $ \left( \begin{array}{ll}  0\, 0 \\ 1\,0 \end{array} \right)$, $ \left( \begin{array}{ll}  1 &  \phantom{-}0 \\  0 &-1 \end{array} \right)$ of the algebra    $\mathfrak{sl_{2}}$ act on    $\mathbb{K}[X_{\mathbf{d}}]$   as follows: 
$$
 \left( \begin{array}{ll}  0\, 1 \\ 0\,0 \end{array} \right). f=\mathcal{D}_{\mathit{\mathbf{d}}}\left(f\right), \left( \begin{array}{ll}  0\, 0 \\ 1\,0 \end{array} \right). f=\mathcal{D}^*_{\mathit{\mathbf{d}}}\left(f\right), \left( \begin{array}{ll}  1 &  \phantom{-}0 \\  0 &-1 \end{array} \right). f=\mathcal{E}_{\mathit{\mathbf{d}}}\left(f\right),
$$
for any $f \in \mathbb{K}[X_{\mathbf{d}}].$

Let   $\mathfrak{u}_{2}=\mathbb{K}[X_{\mathbf{d}}]$ be  the maximal unipotent subalgebra of $\mathfrak{sl}_{2}.$ As above, let us identify   the algebras $\mathcal{I}_{\mathbf{d}},$
 $\mathcal{S}_{\mathbf{d}},$
$$
\begin{array}{l}
\displaystyle \mathcal{I}_{\mathbf{d}}:= \displaystyle{\mathbb{K}[X_{\mathbf{d}}]^{\mathfrak{sl_{2}}}}=\{ v \in \mathbb{K}[X_{\mathbf{d}}]|  \mathcal{D}_{\mathbf{d}}(v)= \mathcal{D}^*_{\mathbf{d}}(v)=0 \},\\ 
\displaystyle \mathcal{S}_{\mathbf{d}}:=\ker \mathcal{D}_{\mathbf{d}}= \displaystyle{ \mathbb{K}[X_{\mathbf{d}}]^{\mathfrak{u_{2}}}}=\{ v \in \mathbb{K}[X_{\mathbf{d}}]|  \mathcal{D}_{\mathbf{d}}(v)=0 \},
\end{array}
$$
with   the algebras of  {joint invariants} and  {\it joint semi-invariants}  of the binary forms of the degrees   $d_1,$ $d_2,$  $\ldots,$ $d_s.$ For any element $v \in \mathcal{S}_{\mathbf{d}}$ a natural number $m$ is called the  order of the element $v$ if the number $r$ is the smallest natural number such that \begin{equation*}(\mathcal{D}^*_{\mathit{\mathbf{d}}})^r(v) \ne 0, (\mathcal{D}^*_{\mathit{\mathbf{d}}})^{r+1}(v) = 0.\end{equation*}
It is  clear that any semi-invariant   of  order $r$ is the highest weight vector  for an  irreducible $\mathfrak{sl_{2}}$-module   of the dimension $r+1$ in $\mathbb{K}[X_{\mathbf{d}}].$
 
The algebra simultaneous covariants is isomorphic to the algebra of  simultaneous semi-invariants. Therefore, it is  enough to compute the Poincar\'e  series of the algebra $\mathcal{S}_{\mathbf{d}}.$

The algebras    $\mathbb{K}[X_{\mathbf{d}}],$ $ \mathcal{I}_{\mathbf{d}},$ $ \mathcal{S}_{\mathbf{d}}$ are  graded algebras:  
$$
\begin{array}{l}
\mathbb{K}[X_{\mathbf{d}}]=(\mathbb{K}[X_{\mathbf{d}}])_0+(\mathbb{K}[X_{\mathbf{d}}])_1+\cdots +(\mathbb{K}[X_{\mathbf{d}}])_m+\cdots,\\
 \mathcal{I}_{\mathbf{d}}=( \mathcal{I}_{\mathbf{d}})_0+( \mathcal{I}_{\mathbf{d}})_1+\cdots+( \mathcal{I}_{\mathbf{d}})_m+\cdots,\\
\mathcal{S}_{\mathbf{d}}=( \mathcal{S}_{\mathbf{d}})_0+( \mathcal{S}_{\mathbf{d}})_1+\cdots+( \mathcal{S}_{\mathbf{d}})_m+\cdots.
\end{array}
$$
and each   $(\mathbb{K}[X_{\mathbf{d}}])_m$ is the complete reducible
 representation of the Lie algebra  $\mathfrak{sl_{2}}.$

Let  $V_k$ be  the standard irreducible    $\mathfrak{sl_{2}}$-module, $\dim V_k=k+1.$ Then, the following primary decomposition  holds
$$
(\mathbb{K}[X_{\mathbf{d}}])_m \cong \gamma_m({\mathbf{d}};0) V_0+\gamma_m({\mathbf{d}};1) V_1+ \cdots +\gamma_m({\mathbf{d}};m \cdot d^*) V_{m\,d^{*}},  \eqno{(1)}
$$
here  $d^*:=\max(d_1,d_2,\ldots d_s)$ and $\gamma_m({\mathbf{d}};k)$ is  the  multiplicity of the representation  $V_k$  in the decomposition of  $(\mathbb{K}[X_{\mathbf{d}}])_m.$ On the other hand, the multiplicity  $\gamma_m({\mathbf{d}};k)$  of the  representation  $V_k$ is  equal to the number of linearly independent homogeneous simultaneous semi-invariants of  the degree $m$   and the order $k.$
In particular, the number of linearly  independent simultaneous invariants of degree  $m$ is  equal to  $\gamma_m({\mathbf{d}};0).$  These arguments prove 
\begin{lm}
$$
\begin{array}{ll}
(i) & \dim (\mathcal{I}_{{\mathbf{d}}})_m=\gamma_m({\mathbf{d}};0),\\
(ii) & \dim (S_{\mathbf{d}})_m=\gamma_m({\mathbf{d}};0)+\gamma_m({\mathbf{d}};1) + \cdots +\gamma_m({\mathbf{d}};m\,d^*).
\end{array}
$$
\end{lm}
Let us recall some general facts about the representation theory of the Lie algebra  $\mathfrak{sl_{2}}.$

The set of weights  of a representation  $W$ denote by  $\Lambda_{W},$  in particular, $\Lambda_{V_d}=\{-d, -d+2, \ldots, d \}.$ 
 A  formal sum 
$$
{\rm Char}(W)=\sum_{\lambda \in \Lambda_{W}} n_W(\lambda) q^{\lambda},
$$
is called the character   of a representation  $W,$  
here   $n_W(\lambda)$ denotes the   multiplicity  of the weight $\lambda \in \Lambda_{W}.$
Since, a  multiplicity of any weight of the irreducible representation $V_d$  is  equal to 1,   we have  
$$
{\rm Char}(V_d)=q^{-d}+q^{-d+2}+\cdots+q^{d}.
$$
Let us consider the  $s$  sets of variables  $x^{(1)}_{0},x^{(1)}_{1},\ldots, x^{(1)}_{d_1},$ $x^{(2)}_{0},x^{(1)}_{2},\ldots, x^{(2)}_{d_2},$ $\ldots,$ $x^{(s)}_{0},x^{(s)}_{1},\ldots, x^{(s)}_{d_s}.$
The character  $ {\rm Char}\left((\mathbb{K}[X_{\mathbf{d}}])_m\right)$ of the representation  $(\mathbb{K}[X_{\mathbf{d}}])_m,$ see  \cite{FH}, equals   $$H_m(q^{-d_1},q^{-d_1+2},\ldots,q^{d_1},q^{-d_2},q^{-d_2+2},\ldots,q^{d_2},\ldots,q^{-d_s},q^{-d_s+2},\ldots,q^{d_s}),$$   where  $H_m(x^{(1)}_{0},x^{(1)}_{1},\ldots, x^{(1)}_{d_1},\ldots,x^{(s)}_{0},x^{(s)}_{1},\ldots, x^{(s)}_{d_s})$ is  the complete symmetrical function     
$$
\begin{array}{l}
\displaystyle H_m(x^{(1)}_{0},x^{(1)}_{1},\ldots, x^{(1)}_{d_1},\ldots,x^{(s)}_{0},x^{(s)}_{1},\ldots, x^{(s)}_{d_s})=\\
\\
\displaystyle =\sum_{|\alpha^{(1)}|+\ldots+|\alpha^{(s)}|=m} (x^{(1)}_{0})^{\alpha_0^{(1)}}(x^{(1)}_{1})^{\alpha_1^{(1)}}\ldots (x^{(1)}_{d_1})^{\alpha_{d_1}^{(1)}}\cdots (x^{(s)}_{0})^{\alpha_0^{(s)}}(x^{(s)}_{1})^{\alpha_1^{(s)}} \ldots (x^{(1)}_{d_s})^{\alpha_{d_1}^{(s)}},
\end{array}
$$
where $\displaystyle |\alpha^{(k)}|:=\sum_{i=0}^{d_i}\alpha_i^{(k)}.$

By replacing   $x_i^{(k)}=q^{d_k-2\,i},$   we  obtain the specialized expression for the character   $(\mathbb{K}[X_{\mathbf{d}}])_m,$ namely 
$$
\begin{array}{c}
\displaystyle {\rm Char}((\mathbb{K}[X_{\mathbf{d}}])_m)= \\
\\
\displaystyle =\sum_{|\alpha^{(1)}|+\ldots+|\alpha^{(s)}|=n} (q^{d_1})^{\alpha_0^{(1)}} (q^{d_1-2\cdot 1})^{\alpha_1^{(1)}} \ldots (q^{-d_1})^{\alpha_{d_1}^{(1)}} \ldots  (q^{d_s})^{\alpha_0^{(s)}} (q^{d_s-2\cdot 1})^{\alpha_1^{(s)}} \ldots (q^{-d_s})^{\alpha_{d_s}^{(s)}} =
\\
\\
\displaystyle = \sum_{|\alpha^{(1)}|+\ldots+|\alpha^{(s)}|=n} q^{d_1|\alpha^{(1)}|+\ldots+d_s|\alpha^{(s)}| +\left(\alpha_1^{(1)}+2\alpha_2^{(1)}+\cdots + d_1\, \alpha_{d_1}^{(1)}\right)+\ldots+\left(\alpha_1^{(s)}+2\alpha_2^{(s)}+\cdots + d_s\, \alpha_{d_s}^{(s)}\right)}=\\
\\
\displaystyle =\sum_{i=-m\, d^*}^{m\,d^*} \omega_n({\mathbf{d}};i) q^{i},
\end{array}
$$
here    $\omega_m({\mathbf{d}};i)$  is the number of nonnegative integer solutions of the following  system of equations:
$$
\left \{
\begin{array}{l}
d_1|\alpha^{(1)}|+\ldots+d_s|\alpha^{(s)}| +\left(\alpha_1^{(1)}+2\alpha_2^{(1)}+\cdots + d_1\, \alpha_{d_1}^{(1)}\right)+\\
+\ldots+\left(\alpha_1^{(s)}+2\alpha_2^{(s)}+\cdots + d_s\, \alpha_{d_s}^{(s)}\right)=i \\
\\
|\alpha^{(1)}|+\ldots+|\alpha^{(s)}|=m.
\end{array}
\right. 
 \eqno{(2)}
$$

We can summarize what we have shown so far in  
\begin{te} 
$$
\begin{array}{ll}
(i) & \dim (\mathcal{I}_{{\mathbf{d}}})_m=\omega_m({\mathbf{d}};0)-\omega_m({\mathbf{d}};2),\\
&\\
(ii) & \dim (\mathcal{S}_{{\mathbf{d}}})_m=\omega_m({\mathbf{d}};0)+\omega_m({\mathbf{d}};1).
\end{array}
$$

\end{te}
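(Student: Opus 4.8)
The plan is to compare the two expressions we already have for the character of $(\mathbb{K}[X_{\mathbf{d}}])_m$: the representation-theoretic one coming from the primary decomposition $(1)$, and the combinatorial one computed above, $\sum_{i=-md^*}^{md^*}\omega_m(\mathbf{d};i)\,q^i$. First I would apply ${\rm Char}$ to both sides of $(1)$. Using ${\rm Char}(V_k)=q^{-k}+q^{-k+2}+\cdots+q^{k}$, this gives
$$
\sum_{i=-md^*}^{md^*}\omega_m(\mathbf{d};i)\,q^i=\sum_{k=0}^{md^*}\gamma_m(\mathbf{d};k)\bigl(q^{-k}+q^{-k+2}+\cdots+q^{k}\bigr).
$$
Both sides are invariant under $q\mapsto q^{-1}$ (so in particular $\omega_m(\mathbf{d};i)=\omega_m(\mathbf{d};-i)$), hence it is enough to equate coefficients of $q^i$ for $i\ge 0$.

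Next I would extract the coefficient of $q^i$ for $i\ge 0$. A summand $V_k$ contributes to $q^i$ precisely when $k\ge i$ and $k\equiv i\pmod 2$, and then with coefficient $1$. Therefore
$$
\omega_m(\mathbf{d};i)=\sum_{\substack{k\ge i\\ k\equiv i\ (\mathrm{mod}\ 2)}}\gamma_m(\mathbf{d};k),
$$
and subtracting the analogous identity for $i+2$ yields the key relation $\gamma_m(\mathbf{d};i)=\omega_m(\mathbf{d};i)-\omega_m(\mathbf{d};i+2)$ for every $i\ge 0$.

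Finally I would deduce the two statements. For $(i)$, put $i=0$ in the key relation and invoke Lemma~2.1$(i)$: $\dim(\mathcal{I}_{\mathbf{d}})_m=\gamma_m(\mathbf{d};0)=\omega_m(\mathbf{d};0)-\omega_m(\mathbf{d};2)$. For $(ii)$, Lemma~2.1$(ii)$ says $\dim(\mathcal{S}_{\mathbf{d}})_m=\sum_{k\ge 0}\gamma_m(\mathbf{d};k)$; splitting the sum according to the parity of $k$ and telescoping the key relation — which is legitimate because $\omega_m(\mathbf{d};i)=0$ for $i>md^*$, so the sums terminate — gives $\sum_{k\ \mathrm{even}}\gamma_m(\mathbf{d};k)=\omega_m(\mathbf{d};0)$ and $\sum_{k\ \mathrm{odd}}\gamma_m(\mathbf{d};k)=\omega_m(\mathbf{d};1)$, whence $\dim(\mathcal{S}_{\mathbf{d}})_m=\omega_m(\mathbf{d};0)+\omega_m(\mathbf{d};1)$.

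There is no substantial obstacle here: the whole argument is bookkeeping with $\mathfrak{sl}_2$-characters, built on the primary decomposition $(1)$ and Lemma~2.1. The only points requiring care are getting the parity condition in the character of $V_k$ right and making sure the telescoping sums in part $(ii)$ genuinely terminate, which is exactly why the finiteness (truncation at $md^*$) of the decomposition $(1)$ is invoked.
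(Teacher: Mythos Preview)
Your argument is correct and follows essentially the same route as the paper: relate the weight multiplicities $\omega_m(\mathbf{d};i)$ to the highest-weight multiplicities $\gamma_m(\mathbf{d};k)$ via the character of $V_k$, and then invoke Lemma~2.1. The only cosmetic difference is that the paper writes down directly $\omega_m(\mathbf{d};0)=\sum_{k\ \mathrm{even}}\gamma_m(\mathbf{d};k)$, $\omega_m(\mathbf{d};1)=\sum_{k\ \mathrm{odd}}\gamma_m(\mathbf{d};k)$, and $\omega_m(\mathbf{d};2)=\sum_{k\ \mathrm{even},\,k>0}\gamma_m(\mathbf{d};k)$ and subtracts/adds, whereas you first isolate the general identity $\gamma_m(\mathbf{d};i)=\omega_m(\mathbf{d};i)-\omega_m(\mathbf{d};i{+}2)$ and then telescope; the content is identical.
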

\begin{proof}
\noindent
 $(i)$
The zero weight appears  once  in any  representation $V_k,$  for even $k$,  therefore
$$
\omega_m({\mathbf{d}};0)=\gamma_m({\mathbf{d}};0)+\gamma_m({\mathbf{d}};2)  +\gamma_m({\mathbf{d}};4)+\ldots
$$
The weight   $2$ appears  once in any  representation   $V_k,$  for even $k>0$,  therefore
$$
\omega_m({\mathbf{d}};2)=\gamma_m({\mathbf{d}};2)+\gamma_m({\mathbf{d}};4)  +\gamma_m({\mathbf{d}};6)+\ldots
$$
Taking into account Lemma  2.1,  we obtain
$$
\omega_m({\mathbf{d}};0)-\omega_m({\mathbf{d}};2)=\gamma_m({\mathbf{d}};0)= \dim (\mathcal{I}_{{\mathbf{d}}})_m.
$$

\noindent
$(ii)$
The  weight $1$ appears  once  in any  representation $V_k,$  for odd $k$,  therefore
 $$
\omega_m({\mathbf{d}};1)=\gamma_m({\mathbf{d}};1)+\gamma_m({\mathbf{d}};3)  +\gamma_m({\mathbf{d}};5)+\ldots
$$
Thus,
$$
\begin{array}{l}
\displaystyle \omega_m({\mathbf{d}};0)+\omega_m({\mathbf{d}};1)=\\ \\
\displaystyle =\gamma_m({\mathbf{d}};0)+\gamma_m({\mathbf{d}};1)  +\gamma_m({\mathbf{d}};2)+\ldots
+\gamma_m({\mathbf{d}};n\,d)=\\\\
\displaystyle =\dim (S_{{\mathbf{d}}})_m.
\end{array}
$$
\end{proof}

Simplify the system  $(2)$  to 
$$
\left \{
\begin{array}{l}
d_1\alpha_0^{(1)}+(d_1-2)\alpha_1^{(1)}+(d_1-4)\alpha_2^{(1)}+\cdots + (-d_1)\, \alpha_{d_1}^{(1)}+\cdots +\\ \\+d_s\alpha_0^{(s)}+(d_s-2)\alpha_1^{(s)}+(d_s-4)\alpha_2^{(s)}+\cdots + (-d_s)\, \alpha_{d_s}^{(s)}=i, \\
\\
\alpha_0^{(1)}+\alpha_1^{(1)}+\cdots +\alpha_{d_1}^{(1)}+\cdots +\alpha_0^{(s)}+\alpha_1^{(s)}+\cdots +\alpha_{d_1}^{(s)}=n.
\end{array}
\right. 
$$
 It well-known  that  the number
 $\omega_m({\mathbf{d}};i)$ of   non-negative integer solutions of the above  system
is equal to the coefficient of  $\displaystyle t^m z^i $ of the  expansion   of the  series
$$
\begin{array}{l}
f_{{\mathbf{d}}}(t,z)=\\
\\
=\displaystyle \frac{1}{(1-t z^{d_1})(1-t\,z^{d_1-2})\ldots (1-t\,z^{-d_1})\cdots (1-t z^{d_s})(1-t\,z^{d_s-2})\ldots (1-t\,z^{-d_s})}.
\end{array}
$$
Denote it in such a way:  $\omega_m({\mathbf{d}};i):=\left[  t^m z^i\right](f_{{\mathbf{d}}}(t,z)).$ 
Observe that $ f_{{\mathbf{d}}}(t,z)=f_{{\mathbf{d}}}(t,z^{-1}).$

The  following statement holds
\begin{te} 
$$
\begin{array}{ll}
(i) & \dim (I_{{\mathbf{d}}})_m=[t^m ](1-z^2)f_{{\mathbf{d}}}(t,z),\\
&\\
(ii) & \dim (\mathcal{S}_{{\mathbf{d}}})_m=[t^m ](1+z)f_{{\mathbf{d}}}(t,z).
\end{array}
$$
\end{te}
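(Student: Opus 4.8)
The plan is to read off both identities from the Sylvester--Cayley type formula in the preceding theorem, combined with the coefficient description $\omega_m(\mathbf{d};i)=[t^{m}z^{i}]\,f_{\mathbf{d}}(t,z)$ and the palindromy $f_{\mathbf{d}}(t,z)=f_{\mathbf{d}}(t,z^{-1})$ already observed above. First I would fix the formal setting: $f_{\mathbf{d}}(t,z)$ is a product of finitely many geometric series $(1-tz^{j})^{-1}=\sum_{k\ge 0}t^{k}z^{jk}$, with $j$ running over $\{d_{k},d_{k}-2,\dots,-d_{k}\}$ for $k=1,\dots,s$, so it is a well-defined element of $\mathbb{Z}[z,z^{-1}][[t]]$; the coefficient of each power $t^{m}$ is a genuine Laurent polynomial in $z$, coefficient extraction $[t^{m}z^{i}]$ makes sense, and multiplying by the Laurent polynomial $1-z^{2}$ (respectively $1+z$) and then selecting the term free of $z$ commutes with taking the $t^{m}$-coefficient. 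The symbol $[t^{m}]$ in the statement is to be understood as ``coefficient of $t^{m}$, followed by the constant term in $z$''; with that reading the right-hand sides are nonnegative integers, and summing them over $m$ recovers the Poincar\'e series $\mathcal{PI}_{\mathbf{d}}$ and $\mathcal{PS}_{\mathbf{d}}=\mathcal{PD}_{\mathbf{d}}$.

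Next I would extract the symmetry explicitly. Writing $f_{\mathbf{d}}(t,z)=\sum_{m,j}\omega_m(\mathbf{d};j)\,t^{m}z^{j}$ and substituting $z\mapsto z^{-1}$ gives $\sum_{m,j}\omega_m(\mathbf{d};j)\,t^{m}z^{-j}$, so the identity $f_{\mathbf{d}}(t,z)=f_{\mathbf{d}}(t,z^{-1})$ forces $\omega_m(\mathbf{d};i)=\omega_m(\mathbf{d};-i)$ for all $m$ and all $i$. Then part $(i)$ is immediate:
$$
[t^{m}z^{0}]\bigl((1-z^{2})\,f_{\mathbf{d}}(t,z)\bigr)=[t^{m}z^{0}]f_{\mathbf{d}}(t,z)-[t^{m}z^{-2}]f_{\mathbf{d}}(t,z)=\omega_m(\mathbf{d};0)-\omega_m(\mathbf{d};-2)=\omega_m(\mathbf{d};0)-\omega_m(\mathbf{d};2),
$$
and the last expression equals $\dim(\mathcal{I}_{\mathbf{d}})_{m}$ by part $(i)$ of the preceding theorem. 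The same computation with the factor $1+z$ in place of $1-z^{2}$ gives
$$
[t^{m}z^{0}]\bigl((1+z)\,f_{\mathbf{d}}(t,z)\bigr)=\omega_m(\mathbf{d};0)+\omega_m(\mathbf{d};-1)=\omega_m(\mathbf{d};0)+\omega_m(\mathbf{d};1)=\dim(\mathcal{S}_{\mathbf{d}})_{m},
$$
again by palindromy and then part $(ii)$ of the preceding theorem, which is $(ii)$.

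There is no serious obstacle: the argument is a two-line manipulation of formal series. The only things that require a little care are the precise formal setup --- that $f_{\mathbf{d}}\in\mathbb{Z}[z,z^{-1}][[t]]$, and that ``multiply by a Laurent polynomial in $z$'' and ``take the $z$-free part of the $t^{m}$-coefficient'' may be carried out in either order --- and using the $z\leftrightarrow z^{-1}$ symmetry in the right direction, so as to identify the negative-weight contribution $\omega_m(\mathbf{d};-2)$ (respectively $\omega_m(\mathbf{d};-1)$) produced by the multiplication with the term $\omega_m(\mathbf{d};2)$ (respectively $\omega_m(\mathbf{d};1)$) that appears in the Sylvester--Cayley formula.
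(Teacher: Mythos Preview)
Your argument is correct and matches the paper's proof essentially line for line: both use the preceding Sylvester--Cayley formula, the identification $\omega_m(\mathbf{d};i)=[t^{m}z^{i}]f_{\mathbf{d}}(t,z)$, and the symmetry $f_{\mathbf{d}}(t,z)=f_{\mathbf{d}}(t,z^{-1})$ (you phrase it as $\omega_m(\mathbf{d};i)=\omega_m(\mathbf{d};-i)$, the paper as a change of variable $z\mapsto z^{-1}$). The only cosmetic difference is direction---the paper starts from $\omega_m(\mathbf{d};0)-\omega_m(\mathbf{d};2)$ and works toward the coefficient expression, while you start from $[t^{m}z^{0}]\bigl((1-z^{2})f_{\mathbf{d}}\bigr)$ and work back.
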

\begin{proof}
Taking into account the formal property $[x^{i-k}]f(x)=[x^{i}](x^k f(x)),$ we  get
$$
\begin{array}{l}
 \dim (I_{{\mathbf{d}}})_m=\omega_m({\mathbf{d}};0)-\omega_m({\mathbf{d}};2)=[t^m]f_{{\mathbf{d}}}(t,z)-[t^m\,z^2]f_{{\mathbf{d}}}(t,z)=\\
\\
=[t^m]f_{{\mathbf{d}}}(t,z)-[t^m]z^{-2} f_{{\mathbf{d}}}(t,z)=[t^m]f_{{\mathbf{d}}}(t,z)-[t^m]z^{2} f_{{\mathbf{d}}}(t,z^{-1})=\\
\\
=[t^m](1-z^{2}) f_{{\mathbf{d}}}(t,z).
\end{array}
$$

In the same way
$$
\begin{array}{l}
 \dim (S_{{\mathbf{d}}})_m=\omega_m({\mathbf{d}};0)+\omega_m({\mathbf{d}};1)=[t^m]f_{{\mathbf{d}}}(t,z)+[t^m\,z]f_{{\mathbf{d}}}(t,z)=\\
\\
=[t^m]f_{{\mathbf{d}}}(t,z)+[t^m]z^{-1} f_{{\mathbf{d}}}(t,z)=[t^m](1+z) f_{{\mathbf{d}}}(t,z).
\end{array}
$$
\end{proof}

It is easy to see that the dimensions   $\dim (I_{{\mathbf{d}}})_m,$ $\dim (S_{{\mathbf{d}}})_m$ alow the folloving representations:
$$
\begin{array}{ll}
 & \displaystyle \dim (I_{{\mathbf{d}}})_m=[t^m]\frac{1}{2\pi i} \oint_{|z|=1}(1-z^2) f_{{\mathbf{d}}}(t,z) \frac{dz}{z},\\
&\\
 & \displaystyle \dim (\mathcal{S}_{{\mathbf{d}}})_m=[t^m]\frac{1}{2\pi i} \oint_{|z|=1}(1+z) f_{{\mathbf{d}}}(t,z) \frac{dz}{z}.
\end{array}
$$

\section{Springer type  formulas for the Poincar\'e  series }
 Let us prove  a Springer type  formulas for the Poincar\'e  series  $\mathcal{PI}_{{\mathbf{d}}}(z),$ $ \mathcal{PS}_{{\mathbf{d}}}(z)= \mathcal{PD}_{{\mathbf{d}}}(z)$  of the  algebras  simultaneous invariants and  semi-invariants of  two binary forms. 

Consider the $\mathbb{C}$-algebra $\mathbb{C}[[t,z]]$   of  the formal   power series.
For an arbitrary   $m,n \in \mathbb{Z^+}$  define  $\mathbb{C}$-linear function
$$ \Psi_{m,n}:\mathbb{C}[[t,z]] \to \mathbb{C}[[z]],$$  in the  following  way:
$$
\Psi_{m,n}\left(\sum_{i,j=0}^{\infty} a_{i,j}\, t^i z^j\right)=\sum_{i=0}^{\infty} a_{i m,i n} z^i.
$$
Denote by $\varphi_n$  the restriction of 
$\Psi_{m,n}$ to  $\mathbb{C}[[z]],$ namely 

$$
\varphi_{n}\left(\sum_{i=0}^{\infty} a_{i}z^i \right)=\sum_{i=0}^{\infty} a_{i n} z^i. 
$$
There is an effective algorithm of   calculation  for the function $ \varphi_n, $ see  
 \cite{BC1}.
In  some  cases   calculation of the functions $ \Psi$  can  be reduced   to  calculation of  the functions    $\varphi$.  The following  statements hold:

\begin{lm} For   $R(z) \in \mathbb{C}[[z]]$  and for   $ m, n, k  \in \mathbb{N}$    we have:

$$
\begin{array}{ll}

 &  \displaystyle  \Psi_{1,n}\left( \frac{R(z)}{(1-t z^k)^m} \right)=\left \{ \begin{array}{l} \displaystyle \frac{1}{(m-1)!}\frac{d^{m-1}(z^{m-1}\, \varphi_{n-k}(R(z)))}{dz^{m-1}}, n > k; \\ \\ \displaystyle \frac{R(0)}{(1-z)^m},n=k;
 \\ \\ R(0),  \text{  if  } k>n. \end{array} \right. 
\end{array}
$$
\end{lm}
\begin{proof}
 Let  $R(z)=\sum_{j=0}^{\infty} r_{j} z^j.$  Observe, that  
$$
\frac{1}{(1-x)^m}=\frac{1}{(m-1)!}\left[ \frac{1}{1-x}\right]^{(m-1)}_x=\sum_{i=0}^{\infty} { s+m-1 \choose m-1} x^s.
$$
Then for   $n>k$  we have 
$$
\begin{array}{l}
\displaystyle \Psi_{1,n}\left( \frac{R(z)}{(1-t z^k)^m} \right)=\Psi_{1,n}\Big( \sum_{j,s\geq 0}  { s+m-1 \choose m-1}\,r_j z^j (t z^k)^s\Big)=\\ \\ 
\displaystyle =\Psi_{1,n}\Big(\sum_{s\geq 0}  { s+m-1 \choose m-1}r_{s(n-k)}\, (t z^n)^s \Big){=}\sum_{s \geq 0}   { s+m-1 \choose m-1}\, r_{s(n-k)} z^s.
\end{array}
$$
On other hand
 $$
\begin{array}{l}
\displaystyle \frac{1}{(m-1)!}\left(z^{m-1} \varphi_{n-k}(R(z))\right)^{(m-1)}_z=\frac{1}{(m-1)!} \left(\sum_{s=0}^{\infty} r_{s(n-k)} z^{m+s-1}\right)^{(m-1)}_z=\\
\\
\displaystyle = \frac{1}{(m-1)!}\sum_{s \geq 0} (s+m-1)(s+m-2)\ldots (s+1) r_{s(n-k)} z^s=\sum_{s \geq 0}   { s+m-1 \choose m-1}\, r_{s(n-k)} z^s.

\end{array}
$$
This proves the case $n>k.$

Taking into account the formal property $$ \Psi_{1,n}\left(F(tz^n)\, H(t,z)\right)=F(z)  \Psi_{1,n}(H(t,z)),  F(z),H(t,n) \in \mathbb{C}[[t,z]],$$
for the case $n=k$  we  have 
$$
 \Psi_{1,n}\left( \frac{R(z)}{(1-t z^n)^m} \right)= \frac{1}{(1-z)^m}  \Psi_{1,n}\left(R(z)\right)= \frac{R(0)}{(1-z)^m}.
$$
To prove the case  $n<k,$ observe that, the equation $k s+j=n s$  for $n<k$ and $j, s \geq 0$  has only one trivial  solution $j=s=0$. We have
$$
\displaystyle \Psi_{1,n}\left( \frac{R(z)}{1-t z^k} \right)=\Psi_{1,n}\left( \sum_{j,s\geq 0} r_j  z^j (t z^k)^s\right)
{=}\Psi_{1,n}\left( \sum_{j,s\geq 0} r_j t^s  z^{k s+j}\right)=r_0=R(0).
$$

 \end{proof}

The main idea of the calculations of the paper  is that 
the  Poincar\'e series $ \mathcal{PI}_{{\mathbf{d}}}(z),$ $\mathcal{PS}_{{\mathbf{d}}}(z)$  can be expressed  in terms of functions $ \Psi.$ The following simple but important statement  holds:

\begin{lm}  Let   $d^*:=\max({\mathbf{d}}).$ Then 
$$
\begin{array}{ll}
(i) & \mathcal{PI}_{{\mathbf{d}}}(z)=\Psi_{1,d^*}\left((1-z^2)f_{{\mathbf{d}}}(tz^{d^*},z)\right),\\
\\
(ii) & \mathcal{PS}_{{\mathbf{d}}}(z)=\Psi_{1,d^*}\left((1+z)f_{{\mathbf{d}}}(tz^{d^*},z)\right).\\

\end{array}
$$

\end{lm}
\begin{proof} Theorem  2   states  that  $\dim(\mathcal{I}_{{\mathbf{d}}})_n=[t ^n](1-z^2)f_{{\mathbf{d}}}(t,z).$ 
Then 
$$
\begin{array}{l}
\displaystyle  \mathcal{PI}_{{\mathbf{d}}}(z) = \sum_{n=0}^{\infty}  \dim(I_{{\mathbf{d}}})_n z^n=\sum_{n=0}^{\infty} \bigl([t ^n](1-z^2)f_{{\mathbf{d}}}(t,z)\bigr)z^n{=}\\
\\
\displaystyle =\sum_{n=0}^{\infty} \bigl([(tz^{d^*}) ^n](1-z^2)f_{{\mathbf{d}}}(tz^{d^*},z)\bigr)z^n{=} \Psi_{1,d^*}\left((1-z^2)f_{{\mathbf{d}}}(tz^d,z)\right).
\end{array}
$$
Similarly, we prove   the  statement $(ii).$ 

We  replaced  $t$  with   $tz^{d^*}$  to avoid  of a negative powers  of $z$  in the denominator of the function $f_{{\mathbf{d}}}(t,z).$
\end{proof}

Write  the function $f_{{\mathbf{d}}}(t,z)$ in the following way 
$$
f_{{\mathbf{d}}}(t,z)=\frac{1}{\prod_{k=1}^s (tz^{- \,d_k},z^2)_{d_k+1}},
$$
here $(a,q)_n=(1-a) (1-a\,q)\cdots (1-a\,q^{n-1})$ denotes the $q$-shifted factorial.

The above lemma implies the following representations of the Poincar\'e series via the contour integrals:
\begin{lm} 
$$
\begin{array}{ll}
(i) & \displaystyle  \mathcal{PI}_{{\mathbf{d}}}(t)=\frac{1}{2\pi i} \oint_{|z|=1} \frac{ 1-z^2}{\prod_{k=1}^s (tz^{-\, d_k},z^2)_{d_k+1}} \frac{dz}{z}

,\\
\\
(ii) & \displaystyle \mathcal{PS}_{{\mathbf{d}}}(t)=\frac{1}{2\pi i} \oint_{|z|=1} \frac{ 1+z}{\prod_{k=1}^s (tz^{ -\, d_k},z^2)_{d_k+1}} \frac{dz}{z}.
\\
\end{array}
$$
\end{lm}
\begin{proof}
We have 
$$
\begin{array}{l}
\displaystyle  \mathcal{PS}_{{\mathbf{d}}}(t) = \sum_{n=0}^{\infty}  \dim(I_{{\mathbf{d}}})_n t^n=\sum_{n=0}^{\infty} \bigl([t ^n](1+z)f_{{\mathbf{d}}}(t,z)\bigr)t^n{=}\\
\\
\displaystyle = \sum_{n=0}^{\infty}\left([t ^n] \frac{1}{2\pi i} \oint_{|z|=1}(1+z) f_{{\mathbf{d}}}(t,z) \frac{dz}{z} \right)
t^n{=}
 \frac{1}{2\pi i} \oint_{|z|=1}(1+z) f_{{\mathbf{d}}}(t,z) \frac{dz}{z}.
\end{array}
$$

Similarly we get the Poincar\'e series  $\mathcal{PI}_{{\mathbf{d}}}(t).$
\end{proof}

Note that   the Molien-Weyl integral formula  for the Poincar\'e series  $\mathcal{P}_d(t)$ of the algebra of  invariants of   binary $d$-form  can be reduced to the following formula
$$
\mathcal{P}_d(t)=\frac{1}{2\pi i} \oint_{|z|=1} \frac{ 1-z^2}{(1-tz^d)(1-tz^{d-2})\ldots (1-tz^{-d})} \frac{dz}{z}=\frac{1}{2\pi i} \oint_{|z|=1} \frac{ 1-z^2}{(tz^{-\, d},z^2)_{d+1}} \frac{dz}{z}.
$$
 see \cite{DerK}, p. 183.  An ingenious way to calculate such  integrals proposed in \cite{DZ}. 


After simplification we  can write  $f_{{\mathbf{d}}}(tz^{d^*},z)$ in the following way
$$
f_{{\mathbf{d}}}(tz^{d^*},z)=\left((1-t)^{\beta_0} (1-t z)^{\beta_1} (1-t z^2)^{\beta_2} \ldots (1-t z^{2\,d^*})^{\beta_{2\,d^*}}\right)^{-1},
$$
for some integer  $\beta_0, \ldots \beta_{d^*}.$  For example
$$
f_{(1,2,4)}(tz^{4},z)={\frac {1}{\left( 1-t \right) \left( 1-t{z}^{2} \right) ^{2}\left( 1-t z^3 \right) \left( 1-t{z}^{4} \right) ^
{2} \left( 1-t z^5 \right)\left( 1-t{z}^{6} \right) ^{2}   \left( 1-t{z}^{
8} \right) }}.
$$

It implies the following partial fraction decomposition of 
 $f_{{\mathbf{d}}}(tz^{d^*},z):$ 
$$
f_{{\mathbf{d}}}(tz^{d^*},z)=\sum_{i=0}^{2\,d^*} \sum_{k=1}^{\beta_i} \frac{A_{i,k}(z)}{(1-tz^i)^k}, 
$$
for some polynomials $A_{i,k}(z).$

By direct calculations we obtain
$$
\begin{array}{l}
\displaystyle  A_{i,k}(z)=\frac{(-1)^{\beta_i-k}}{(\beta_i-k)!\,(z^{i})^{{\beta_i-k}}}\lim_{t \to z^{-i}} \frac{\partial^{{\beta_i-k}}}{\partial t^{{\beta_i-k}}}\left(f_{d}(tz^{d^*},z)(1-t z^{i})^{\beta_i} \right).
\end{array}
$$

Now we  can present    Springer type  formulas  for the Poincar\'e  series  $\mathcal{PI}_{{\mathbf{d}}}(z)$ and $\mathcal{PS}_{{\mathbf{d}}}(z).$
\begin{te} 
$$
\begin{array}{l}
\displaystyle \mathcal{PI}_{{\mathbf{d}}}(z)=\sum_{i=0}^{d^*} \sum_{k=1}^{\beta_i} \frac{1}{(k-1)!} \frac{d^{k-1}\left(  z^{k-1} \varphi_{d^*-k}((1-z^2)\,A_{i,k}(z))\right)}{dz^{k-1}},\\
\\
\displaystyle \mathcal{PS}_{{\mathbf{d}}}(z)=\sum_{i=0}^{d^*} \sum_{k=1}^{\beta_i} \frac{1}{(k-1)!} \frac{d^{k-1}\left(  z^{k-1} \varphi_{d^*-k}((1+z)\,A_{i,k}(z))\right)}{dz^{k-1}}.
\end{array}
$$
\end{te}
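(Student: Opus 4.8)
The plan is to deduce both identities from Lemma~3.2 and Lemma~3.1 by feeding the partial fraction decomposition of $f_{\mathbf{d}}(tz^{d^*},z)$ into the $\Psi$-formulas; no ingredient beyond those two lemmas is needed, only careful bookkeeping. By Lemma~3.2,
$$
\mathcal{PI}_{\mathbf{d}}(z)=\Psi_{1,d^*}\bigl((1-z^2)f_{\mathbf{d}}(tz^{d^*},z)\bigr),\qquad
\mathcal{PS}_{\mathbf{d}}(z)=\Psi_{1,d^*}\bigl((1+z)f_{\mathbf{d}}(tz^{d^*},z)\bigr),
$$
and the rescaling $t\mapsto tz^{d^*}$ was made precisely so that $f_{\mathbf{d}}(tz^{d^*},z)=\prod_{j=0}^{2d^*}(1-tz^{j})^{-\beta_j}$ has no negative powers of $z$; this makes the expansion in partial fractions with respect to $t$, namely $f_{\mathbf{d}}(tz^{d^*},z)=\sum_{i=0}^{2d^*}\sum_{k=1}^{\beta_i}A_{i,k}(z)(1-tz^{i})^{-k}$ with the $A_{i,k}$ given by the residue formula recorded above, a legitimate identity.

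First I would substitute this decomposition into the two displays of Lemma~3.2 and use the $\mathbb{C}$-linearity of $\Psi_{1,d^*}$ to move it inside the finite double sum. That reduces the task to evaluating, for each pair $(i,k)$, an expression $\Psi_{1,d^*}\bigl(R_{i,k}(z)(1-tz^{i})^{-k}\bigr)$, with $R_{i,k}(z)=(1-z^2)A_{i,k}(z)$ in the invariant case and $R_{i,k}(z)=(1+z)A_{i,k}(z)$ in the semi-invariant case. Each such expression is exactly of the type handled by Lemma~3.1: comparing the denominators $(1-tz^{k})^{m}$ there and $(1-tz^{i})^{k}$ here, one applies the lemma with $n=d^*$, with $i$ in place of its $k$ and with our $k$ in place of its $m$. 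For every $i<d^*$ the first branch of Lemma~3.1 applies and produces precisely $\tfrac{1}{(k-1)!}\,\tfrac{d^{k-1}}{dz^{k-1}}\bigl(z^{k-1}\varphi_{d^*-i}(R_{i,k}(z))\bigr)$, which is the $(i,k)$-summand written in the theorem; summing over these pairs already yields the stated right-hand sides.

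The step I expect to be the real obstacle is the treatment of the boundary indices $i\ge d^*$, which is what lets the sum be truncated at $d^*$ as written. For $i=d^*$ one is in the second branch of Lemma~3.1 and obtains $R_{i,k}(0)/(1-z)^{k}$; this matches the general term under the formally valid convention $\varphi_0(R)(z)=R(0)/(1-z)$, since one checks at once that $\tfrac{1}{(k-1)!}\tfrac{d^{k-1}}{dz^{k-1}}\bigl(z^{k-1}R(0)/(1-z)\bigr)=R(0)/(1-z)^{k}$, so the $i=d^*$ terms fold into the uniform sum running up to $d^*$. For $i>d^*$ one is in the third branch and collects the constants $R_{i,k}(0)$, and here one must show that these constants add up to zero; this is where I would invoke the symmetry $f_{\mathbf{d}}(t,z)=f_{\mathbf{d}}(t,z^{-1})$, which forces $\beta_i=\beta_{2d^*-i}$ together with a functional equation linking $A_{i,k}$ to $A_{2d^*-i,k}$, and from it extract both that the $A_{i,k}$ are honest power series on which $\varphi$ acts and that the residual constants coming from $i>d^*$ cancel. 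Granting this, collecting the surviving terms gives the first formula, and the same computation with $1-z^2$ replaced throughout by $1+z$ gives the second.
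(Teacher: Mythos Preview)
Your approach is exactly the paper's: invoke Lemma~3.2, insert the partial-fraction decomposition of $f_{\mathbf d}(tz^{d^*},z)$, and apply Lemma~3.1 term by term via the $\mathbb{C}$-linearity of $\Psi_{1,d^*}$. The paper's own argument is a single displayed chain of equalities that passes without comment from the sum over $0\le i\le 2d^*$ to $0\le i\le d^*$, so your discussion of the boundary contributions $i\ge d^*$ is in fact more scrupulous than the original; note also that the $\varphi_{d^*-i}$ you obtain is what Lemma~3.1 actually yields, and corrects the paper's evident misprint $\varphi_{d^*-k}$.
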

\begin{proof}
Taking into account  Lemma 3.1 and linearity of the map $\Psi$  we get 
$$
\begin{array}{l}
\displaystyle \mathcal{PS}_{{\mathbf{d}}}(z)=\Psi_{1,d^*}\left((1+z) f_{{\mathbf{d}}}(tz^{d^*},z) \right)=\Psi_{1,d^*}\left(\sum_{i=0}^{2\,d^*} \sum_{k=1}^{\beta_i} \frac{(1+z)A_{i,k}(z)}{(1-tz^i)^k} \right)=\\
\\
\displaystyle = \sum_{i=0}^{d^*} \sum_{k=1}^{\beta_i} \frac{1}{(k-1)!} \frac{d^{k-1}\left(  z^{k-1} \varphi_{d^*-k}((1+z)\,A_{i,k}(z))\right)}{dz^{k-1}}.
\end{array}
$$

The case  $ \mathcal{PI}_{{\mathbf{d}}}(z)$ can be considered similarly.
\end{proof}
Note, the  Poincar\'e series  $\mathcal{PI}_{d}(z)$  and  $\mathcal{PC}_{d}(z)$  of the algebras of invariants and  covariants of   binary $d$-form  equal
$$
\begin{array}{l}
\displaystyle \mathcal{PI}_{d}(z)=\sum_{0\leq k <d/2} \varphi_{d-2\,k} \left( \frac{(-1)^k z^{k(k+1)} (1-z^2)}{(z^2,z^2)_k\,(z^2,z^2)_{d-k}} \right),
\end{array}
$$
$$
\begin{array}{l}
\displaystyle \mathcal{PC}_{d}(z)=\sum_{0\leq k <d/2} \varphi_{d-2\,k} \left( \frac{(-1)^k z^{k(k+1)} (1+z)}{(z^2,z^2)_k\,(z^2,z^2)_{d-k}} \right),
\end{array}
$$
see \cite{SP}  and \cite{BC1}  for details.
\section{Explicit formulas for small  ${\mathbf{d}}$}

The formulas of Theorem 3.1 allow the  simplification for some small values  ${\mathbf{d}}.$

\begin{te} Let $s=n$  and   $d_1=d_2=\ldots=d_n=1,$  i.e. ${\mathbf{d}}=(1,1,\ldots,1).$ Then 
$$
\begin{array}{l}
\displaystyle 
\mathcal{PI}_{\mathbf{d}}(z)=\sum_{k=1}^n \frac{(-1)^{n-k}}{(k-1)!} \frac{(n)_{n-k}}{(n-k)! } \frac{d^{k-1}}{dz^{k-1}}\left( \left( \frac{z}{1-z^2} \right)^{2n-k-1} \right),\\
\\
\displaystyle 
\mathcal{PS}_{\mathbf{d}}(z)=\sum_{k=1}^n \frac{(-1)^{n-k}}{(k-1)!} \frac{(n)_{n-k}}{(n-k)! } \frac{d^{k-1}}{dz^{k-1}} \left( \frac{(1+z)z^{2n-k-1}}{(1-z^2)^{2n-k}} \right),
\end{array}
$$
where
$(n)_m:=n(n+1)\cdots (n+m-1),$ $(n)_0:=1$ denotes the shifted factorial.
\end{te}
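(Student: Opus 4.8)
The plan is to apply Lemma 3.2 together with Lemma 3.1 after a partial fraction decomposition. Since every $d_{k}=1$ we have $d^{*}=1$ and
$$
f_{\mathbf{d}}(t,z)=\frac{1}{(1-tz)^{n}(1-tz^{-1})^{n}},\qquad f_{\mathbf{d}}(tz^{d^{*}},z)=\frac{1}{(1-t)^{n}(1-tz^{2})^{n}}.
$$
Hence, by Lemma 3.2, it is enough to evaluate $\Psi_{1,1}\bigl((1+z)(1-t)^{-n}(1-tz^{2})^{-n}\bigr)$ and $\Psi_{1,1}\bigl((1-z^{2})(1-t)^{-n}(1-tz^{2})^{-n}\bigr)$. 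The key observation is that $\varphi_{1}$ is the identity map on $\mathbb{C}[[z]]$, so once the rational function above is expanded into a sum of terms of the form $R(z)/(1-t)^{k}$ and $R(z)/(1-tz^{2})^{k}$, Lemma 3.1 produces precisely the kind of derivative expressions appearing in the statement.

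First I would compute the partial fraction decomposition with respect to $t$:
$$
\frac{1}{(1-t)^{n}(1-tz^{2})^{n}}=\sum_{k=1}^{n}\frac{B_{k}(z)}{(1-t)^{k}}+\sum_{k=1}^{n}\frac{A_{k}(z)}{(1-tz^{2})^{k}}.
$$
Substituting $u=1-t$ and expanding $(1-z^{2}+uz^{2})^{-n}$ as a binomial series in $u$, the terms with negative powers of $u$ give
$$
B_{k}(z)=(-1)^{n-k}\binom{2n-k-1}{n-k}\frac{z^{2(n-k)}}{(1-z^{2})^{2n-k}}=\frac{(-1)^{n-k}(n)_{n-k}}{(n-k)!}\cdot\frac{z^{2(n-k)}}{(1-z^{2})^{2n-k}},
$$
where I used $\binom{2n-k-1}{n-k}=(n)_{n-k}/(n-k)!$. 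In particular $B_{k}(0)=0$ for $k<n$ and $B_{n}(0)=1$. I will not need the polynomials $A_{k}(z)$ in closed form.

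Next I would apply $\Psi_{1,1}$ to each summand by Lemma 3.1. For the blocks $1/(1-t)^{k}=1/(1-tz^{0})^{k}$ the exponent $0$ is smaller than $d^{*}=1$, so the first case of the lemma (with $\varphi_{1}=\mathrm{id}$) gives
$$
\Psi_{1,1}\!\left(\frac{(1+z)B_{k}(z)}{(1-t)^{k}}\right)=\frac{1}{(k-1)!}\frac{d^{k-1}}{dz^{k-1}}\Bigl(z^{k-1}(1+z)B_{k}(z)\Bigr),
$$
and likewise with $(1+z)$ replaced by $(1-z^{2})$. For the blocks $1/(1-tz^{2})^{k}$ the exponent $2$ exceeds $d^{*}=1$, so the third case of the lemma gives $\Psi_{1,1}\bigl((1+z)A_{k}(z)/(1-tz^{2})^{k}\bigr)=A_{k}(0)$ and $\Psi_{1,1}\bigl((1-z^{2})A_{k}(z)/(1-tz^{2})^{k}\bigr)=A_{k}(0)$. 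These constants cancel out: specializing $z=0$ in the partial fraction identity and using $B_{k}(0)=\delta_{k,n}$ yields $(1-t)^{-n}=\sum_{k=1}^{n}A_{k}(0)+(1-t)^{-n}$, so $\sum_{k=1}^{n}A_{k}(0)=0$.

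Finally I would substitute the formula for $B_{k}(z)$ and simplify, using $z^{k-1}z^{2(n-k)}=z^{2n-k-1}$ and $z^{2n-k-1}/(1-z^{2})^{2n-k-1}=\bigl(z/(1-z^{2})\bigr)^{2n-k-1}$: this turns $z^{k-1}(1+z)B_{k}(z)$ into $\frac{(-1)^{n-k}(n)_{n-k}}{(n-k)!}\cdot\frac{(1+z)z^{2n-k-1}}{(1-z^{2})^{2n-k}}$ and $z^{k-1}(1-z^{2})B_{k}(z)$ into $\frac{(-1)^{n-k}(n)_{n-k}}{(n-k)!}\bigl(z/(1-z^{2})\bigr)^{2n-k-1}$, which are exactly the summands of the two asserted formulas. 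I do not expect a genuine obstacle: the only points requiring care are the sign $(-1)^{n-k}$ and the binomial coefficient in $B_{k}(z)$, and matching each partial fraction summand to the correct case of Lemma 3.1 — the essential remark being that the $(1-tz^{2})$-block always falls in the ``$k>n$'' case of the lemma and contributes only the constants $A_{k}(0)$, whose sum vanishes, so these polynomials never have to be determined.
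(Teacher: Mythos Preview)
Your proof is correct and follows essentially the same route as the paper: write $f_{\mathbf d}(tz,z)=\bigl((1-t)(1-tz^{2})\bigr)^{-n}$, decompose into partial fractions in $t$, and apply $\Psi_{1,1}$ termwise via Lemma~3.1 using $\varphi_{1}=\mathrm{id}$. The paper computes the coefficients $A_{0,k}$ (your $B_{k}$) by differentiating and taking $t\to 1$, whereas you obtain them by the binomial expansion in $u=1-t$; both give the same formula. The only cosmetic difference is that the paper simply asserts that the $(1-tz^{2})$-block contributes nothing (invoking Theorem~3.1), while you argue directly from Lemma~3.1 that each such term yields the constant $A_{k}(0)$ and that these constants sum to zero; in fact each $A_{k}(z)$ carries a factor $z^{2n}$, so every $A_{k}(0)=0$ individually, which is an even quicker way to dispose of that block.
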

\begin{proof}
For ${\mathbf{d}}=(1,1,\ldots,1) \in \mathbb{Z}^n$ we have $d^*=1$ and 
$$
f_{{\mathbf{d}}}(tz^{d^*},z)=\frac{1}{\bigl((1-t)(1-t z^2)\bigr)^n}=\frac{A_{0,1}(z)}{1-t}+\cdots+\frac{A_{0,n}(z)}{(1-t)^n}+R(z), \Psi_{1,1}\left(R(z)\right)=0,
$$
where
$$
A_{0,k}=\frac{(-1)^{n-k}}{(n-k)!} \lim_{t \to 1} \frac{\partial^{{n-k}}}{\partial t^{{n-k}}}\left(\frac{1}{(1-t z^2)^n} \right).
$$
By induction we get 
$$
 \lim_{t \to 1}  \frac{\partial^{{m}}}{\partial t^{{m}}}\left(\frac{1}{(1-t z^2)^n} \right)=(n)_m \frac{(z^2)^m}{(1-z^2)^{n+m}}.
$$
Thus,
$$
A_{0,k}=\frac{(-1)^{n-k}(n)_{n-k}}{(n-k)!} \frac{(z^2)^{n-k}}{(1-z^2)^{2n-k}}.
$$

Now, using Theorem 3.1 and the property  $\varphi_1(F(z))=F(z),$ for any $F(z) \in \mathbb{Z}[[z]]$  we have
$$
\begin{array}{l}
\displaystyle \mathcal{PS}_{\mathbf{d}}(z)=\Psi_{1,1}\left( \sum_{k=1}^s \frac{(1+z)\,A_{0,k}}{(1-t)^k} \right)=  \sum_{k=1}^s \Psi_{1,1}\left( \frac{(1+z)\,A_{0,k}}{(1-t)^k} \right) 
\displaystyle =\\
\\
\displaystyle = \sum_{k=1}^n \frac{1}{(m-1)!} \frac{d^{k-1}}{dz^{k-1}} \left( z^{k-1} \varphi_1 ((1+z)\,A_{0,k})\right)=
 \sum_{k=1}^n \frac{1}{(m-1)!} \frac{d^{k-1}}{dz^{k-1}} \left( (1+z)\,z^{k-1} A_{0,k} \right)=\\
\\
\displaystyle =\sum_{k=1}^n \frac{(-1)^{n-k}}{(k-1)!} \frac{(n)_{n-k}}{(n-k)! } \frac{d^{k-1}}{dz^{k-1}} \left( \frac{(1+z)z^{2s-k-1}}{(1-z^2)^{2n-k}} \right).
\end{array}
$$

The case  $ \mathcal{PI}_{{\mathbf{d}}}(z)$ can be considered similarly.
\end{proof}

\begin{te} Let  $d_1=d_2=\ldots=d_n=2,$ ${\mathbf{d}}=(2,2,\ldots,2)$, then 
$$
\begin{array}{l}
\displaystyle \mathcal{PI}_{\mathbf{d}}(z)=\sum_{k=1}^n \frac{(-1)^{n-k}}{(n-k)(k-1)!}\frac{d^{k-1}}{dz^{k-1}} \left(\sum_{i=0}^{n-k} { n-k  \choose i}  \frac{(n)_i (n)_{n-k-i}(1-z) z^{2n-k-i-1}}{(1- z)^{n+i} (1-z^2)^{2n-k-i}}\right),
\\
\\
\displaystyle \mathcal{PS}_{\mathbf{d}}(z)=\sum_{k=1}^n \frac{(-1)^{n-k}}{(n-k)!(k-1)!}\frac{d^{k-1}}{dz^{k-1}} \left( \sum_{i=0}^{n-k} { n-k  \choose i}  \frac{(n)_i (n)_{n-k-i} z^{2n-k-i-1}}{(1- z)^{n+i} (1-z^2)^{2n-k-i}}\right).
\end{array}
$$

\end{te}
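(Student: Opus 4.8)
The plan is to follow the argument of Theorem 4.1, but now with $d^{*}=2$, so that the nontrivial operator $\varphi_{2}$ intervenes in place of the identity $\varphi_{1}$. For $\mathbf{d}=(2,2,\ldots,2)\in\mathbb{Z}^{n}$ we have $d^{*}=2$ and
$$
f_{\mathbf{d}}(tz^{2},z)=\frac{1}{\bigl((1-t)(1-tz^{2})(1-tz^{4})\bigr)^{n}},
$$
so that $\beta_{0}=\beta_{2}=\beta_{4}=n$ and all the other $\beta_{i}$ vanish. First I would write out the partial fraction decomposition of $f_{\mathbf{d}}(tz^{2},z)$ with respect to $t$,
$$
f_{\mathbf{d}}(tz^{2},z)=\sum_{k=1}^{n}\frac{A_{0,k}(z)}{(1-t)^{k}}+\sum_{k=1}^{n}\frac{A_{2,k}(z)}{(1-tz^{2})^{k}}+\sum_{k=1}^{n}\frac{A_{4,k}(z)}{(1-tz^{4})^{k}},
$$
the coefficients $A_{i,k}$ being given by the formula recorded just before Theorem 3.1.

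By Lemma 3.2 we have $\mathcal{PS}_{\mathbf{d}}(z)=\Psi_{1,2}\bigl((1+z)f_{\mathbf{d}}(tz^{2},z)\bigr)$ and $\mathcal{PI}_{\mathbf{d}}(z)=\Psi_{1,2}\bigl((1-z^{2})f_{\mathbf{d}}(tz^{2},z)\bigr)$. The second step is to show that only the summands with $i=0$ survive. By Lemma 3.1 the $i=2$ terms of the decomposition contribute $A_{2,k}(0)/(1-z)^{k}$ and the $i=4$ terms contribute the constants $A_{4,k}(0)$ (multiplication by $1+z$ or by $1-z^{2}$ does not change the value at $z=0$), so it is enough to check that $A_{2,k}(0)=A_{4,k}(0)=0$. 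This comes out of the defining expression for $A_{i,k}$: for $i\in\{2,4\}$ the product of the remaining factors, $f_{\mathbf{d}}(tz^{2},z)(1-tz^{i})^{n}$, together with all of its $t$-derivatives, is divisible by a high power of $z$ after evaluation at $t=z^{-i}$ — because $(1-z^{-i})^{-n}=(-1)^{n}z^{in}(1-z^{i})^{-n}$ — and that power outweighs the $z^{-i(n-k)}$ standing in front; estimating the lowest-degree term gives divisibility by $z^{2n}$ and by $z^{6n}$, hence vanishing at $z=0$. I expect this step — the analogue of the identity $\Psi_{1,1}(R(z))=0$ used in the proof of Theorem 4.1 — to be the only place demanding genuine care.

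With only the $i=0$ block left, Lemma 3.1 (its case $n>k$, applied with $\Psi$-index $d^{*}=2$) gives
$$
\mathcal{PS}_{\mathbf{d}}(z)=\sum_{k=1}^{n}\frac{1}{(k-1)!}\,\frac{d^{k-1}}{dz^{k-1}}\Bigl(z^{k-1}\,\varphi_{2}\bigl((1+z)A_{0,k}(z)\bigr)\Bigr),
$$
and the same formula with $1+z$ replaced by $1-z^{2}$ for $\mathcal{PI}_{\mathbf{d}}(z)$. I would then evaluate $A_{0,k}(z)=\frac{(-1)^{n-k}}{(n-k)!}\lim_{t\to1}\frac{\partial^{n-k}}{\partial t^{n-k}}\bigl((1-tz^{4})^{-n}(1-tz^{2})^{-n}\bigr)$ by the Leibniz rule together with $\partial_{t}^{m}(1-ta)^{-n}=(n)_{m}\,a^{m}(1-ta)^{-n-m}$, obtaining
$$
A_{0,k}(z)=\frac{(-1)^{n-k}}{(n-k)!}\sum_{i=0}^{n-k}\binom{n-k}{i}(n)_{i}(n)_{n-k-i}\,\frac{z^{4i}}{(1-z^{4})^{n+i}}\cdot\frac{z^{2(n-k-i)}}{(1-z^{2})^{2n-k-i}}.
$$

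Finally, each summand above is a function of $z^{2}$, so $A_{0,k}(z)=\widetilde A_{0,k}(z^{2})$ for a rational $\widetilde A_{0,k}$, where $\widetilde A_{0,k}(z)$ is read off from the last display by replacing $z^{2}$ by $z$ throughout; consequently $\varphi_{2}\bigl((1+z)A_{0,k}(z)\bigr)=\varphi_{2}\bigl(A_{0,k}(z)\bigr)=\widetilde A_{0,k}(z)$ (the odd part $zA_{0,k}(z)$ is annihilated by $\varphi_{2}$) and $\varphi_{2}\bigl((1-z^{2})A_{0,k}(z)\bigr)=(1-z)\widetilde A_{0,k}(z)$. Re-indexing the resulting sum by $i\mapsto n-k-i$ (under which $\binom{n-k}{i}$ and the product $(n)_{i}(n)_{n-k-i}$ are invariant) and multiplying by $z^{k-1}$ yields
$$
z^{k-1}\varphi_{2}\bigl(A_{0,k}(z)\bigr)=\frac{(-1)^{n-k}}{(n-k)!}\sum_{i=0}^{n-k}\binom{n-k}{i}(n)_{i}(n)_{n-k-i}\,\frac{z^{2n-k-i-1}}{(1-z)^{n+i}(1-z^{2})^{2n-k-i}},
$$
and substituting this (and its $(1-z)$-multiple for the invariants) into the two Springer formulas above produces exactly the asserted expressions for $\mathcal{PS}_{\mathbf{d}}(z)$ and $\mathcal{PI}_{\mathbf{d}}(z)$. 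The remaining work — the identity for $\partial_{t}^{m}(1-ta)^{-n}$, the Leibniz expansion, and the bookkeeping of exponents through the re-indexing — is routine.
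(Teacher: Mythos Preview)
Your argument is correct and mirrors the paper's proof almost step for step: partial fractions at the three poles, elimination of the $(1-tz^{2})$ and $(1-tz^{4})$ contributions via vanishing at $z=0$, Leibniz for $A_{0,k}$, and the $\varphi_{2}$ evaluation using that $A_{0,k}$ is a function of $z^{2}$. The only cosmetic differences are that you apply Leibniz with the two factors in the opposite order (whence your harmless re-indexing $i\mapsto n-k-i$ at the end) and that you make explicit the check $A_{4,k}(0)=0$, which the paper glosses over when it invokes Lemma~3.1 for the $C_{k}$ terms.
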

\begin{proof}
It is easy to check  that  in this   case  we have 
$$
f_{{\mathbf{d}}}(tz^2,z)=\frac{1}{\bigl((1-t)(1-t z^2)(1-tz^4)\bigr)^n}.
$$
The decomposition   $f_{{\mathbf{d}}}(tz^2,z)$  into partial fractions yields 
$$
f_{{\mathbf{d}}}(tz^2,z)=\sum_{k=1}^n \left( \frac{A_k(z)}{(1-t)^k} +\frac{B_k(z)}{(1-tz^2)^k} +\frac{C_k(z)}{(1-tz^4)^k}\right),
$$
for some rational functions  $A_k(z), B_k(z), C_k(z).$ Then 
$$
\begin{array}{l}
\displaystyle  \mathcal{PS}_{\mathbf{d}}(z)=\Psi_{1,2}\left((1+z) f_{{\mathbf{d}}}(tz^2,z) \right)=\\
\\
\displaystyle =\sum_{k=1}^n \left( \Psi_{1,2}\left( \frac{(1+z) A_k(z)}{(1-t)^k}\right) +\Psi_{1,2}\left(\frac{(1+z) B_k(z)}{(1-tz^2)^k} \right)+\Psi_{1,2}\left(\frac{(1+z) C_k(z)}{(1-tz^4)^k}\right) \right).
\end{array}
$$
Lemma 3.1 implies that 
$$
\Psi_{1,2}\left(\frac{(1+z) C_k(z)}{(1-tz^4)^k}\right)=0, 
$$
and 
$$
\Psi_{1,2}\left(\frac{(1+z) B_k(z)}{(1-tz^2)^k} \right)=\frac{B_k(0)}{(1-z)^k}, k=1,\ldots, n.
$$
But
$$
B_k(z)=\frac{(-1)^{n-k}}{(n-k)! (z^2)^{n-k}} \lim_{t \to z^{-2}} \frac{\partial^{{n-k}}}{\partial t^{{n-k}}}\left(\frac{1}{(1-t)^n (1-tz^4)^n}\right).
$$
It is easy to see that this parial derivatives has the following form  
$$
\frac{\partial^{{n-k}}}{\partial t^{{n-k}}}\left(\frac{1}{(1-t)^n (1-tz^4)^n}\right)=\frac{\overline B_k(t,z)}{((1-t)(1-tz^4))^{2n-k}}.
$$
for some polynomial $B_k(t,z).$ Moreover,  $ \deg_t(\overline B_k(t,z)) =n-k.$ Then 
$$
\begin{array}{l}
\displaystyle  B_k(z)=\frac{(-1)^{n-k}}{(n-k)! (z^2)^{n-k}} \lim_{t \to z^{-2}}\frac{\overline B_k(t,z)}{((1-t)(1-tz^4))^{2n-k}}=\\
\\
\displaystyle  =\frac{(-1)^{n-k}z^{2n}\overline B_k(1/z^2,z)}{(n-k)! ((z^2-1)(1-tz^4))^{2n-k}}.
\end{array}
$$
It follows that $B_k(z)$ has the factor  $z^{ 2k}$ and then $B_k(0)=0.$ Thus 
$$
\Psi_{1,2}\left(\frac{(1+z) B_k(z)}{(1-tz^2)^k} \right)=0, k=1,\ldots, n.
$$
Therefore
$$
\displaystyle  \mathcal{PS}_{\mathbf{d}}(z)=\sum_{k=1}^n  \Psi_{1,2}\left( \frac{(1+z) A_k(z)}{(1-t)^k}\right)=\sum_{k=1}^n \frac{1}{(k-1)!}\frac{d^{k-1}}{dz^{k-1}} \left(z^{k-1}\varphi_2((1+z)A_k(z))\right).
$$
Let us to calculate  $ A_k(z).$ We have
$$
\begin{array}{l}
\displaystyle A_k(z)=\frac{(-1)^{n-k}}{(n-k)!} \lim_{t\to1}\frac{d^{n-k}}{dt^{n-k}}\left (f_{{\mathbf{d}}}(tz^{2},z) (1-t)^n  \right)=
\\
\\
\displaystyle=\frac{(-1)^{n-k}}{(n-k)!} \lim_{t\to1}\frac{d^{n-k}}{dt^{n-k}}\left ( \frac{1}{(1-tz^2) ^n (1-t z^4)^n}  \right)=\\
\\
\displaystyle =\frac{(-1)^{n-k}}{(n-k)!} \lim_{t\to1} \sum_{i=0}^{n-k} { n-k  \choose i} \left( \frac{1}{(1-tz^2) ^n}  \right)^{(i)}_t  \left( \frac{1}{(1-tz^4) ^n}  \right)^{(n-k-i)}_t=
\\
\\
\displaystyle=\frac{(-1)^{n-k}}{(n-k)!} \lim_{t\to1} \sum_{i=0}^{n-k} { n-k  \choose i} (n)_i (n)_{n-k-i} \frac{z^{2i}}{(1-t z^2)^{n+i}} \frac{z^{4(n-k-i)}}{(1-t z^4)^{2n-k-i}}=\\
\\
\displaystyle =\frac{(-1)^{n-k}}{(n-k)!}  \sum_{i=0}^{n-k} { n-k  \choose i} (n)_i (n)_{n-k-i} \frac{(z^2)^{2(n-k)-i}}{(1- z^2)^{n+i} (1-z^4)^{2n-k-i}}.
\end{array}
$$

Taking into account that  
$\varphi_2(F(z^2))=F(z),$ and $\varphi_2(zF(z^2))=0$
we obtain 
$$
\begin{array}{l}
\displaystyle \varphi_2((1+z)A_k(z) )=\varphi_2(A_k(z))=\\
\\
\displaystyle =\frac{(-1)^{n-k}}{(n-k)!} \sum_{i=0}^{n-k} { n-k  \choose i} (n)_i (n)_{n-k-i} \frac{(z)^{2(n-k)-i}}{(1- z)^{n+i} (1-z^2)^{2n-k-i}}.
\end{array}
$$
Thus,
$$
\begin{array}{l}
\displaystyle  \mathcal{PS}_{\mathbf{d}}(z)=\sum_{k=1}^n  \Psi_{1,2}\left( \frac{(1+z) A_k(z)}{(1-t)^k}\right)=\sum_{k=1}^n \frac{1}{(k-1)!}\frac{d^{k-1}}{dt^{k-1}}\left(z^{k-1} \varphi_2((1+z)A_k(z) \right)=\\
\\
\displaystyle = \sum_{k=1}^n \frac{(-1)^{n-k}}{(n-k)!(k-1)!}\frac{d^{k-1}}{dz^{k-1}} \left( \sum_{i=0}^{n-k} { n-k  \choose i}  \frac{(n)_i (n)_{n-k-i} z^{2n-k-i-1}}{(1- z)^{n+i} (1-z^2)^{2n-k-i}}\right).
\end{array}
$$

By replacing the factor $1+z$  with $1-z^2$ in $\mathcal{PS}_{{\mathbf{d}}}(z)$  and taking  into account that  $$\varphi_2((1-z^2) A_k(z))=(1-z) \varphi_2(A_k(z)),$$

 get the Poincar\'e series  $\mathcal{PI}_{{\mathbf{d}}}(z).$
\end{proof}

\section{Examples}  

 For direct computations of the function $\varphi_n$ we use the following technical lemma, see \cite{BC1}:

\begin{lm} Let   $R(z)$ be some polynomial of $z.$ Then
$$
\varphi_n\left(\frac{R(z)}{(1-z^{k_1})(1-z^{k_2})\cdots(1-z^{k_m})} \right)= 
\frac{\varphi_n\bigr(R(z)Q_n(z^{k_1})Q_n(z^{k_2})Q_n(z^{k_m})\bigr)}{(1-z^{k_1})(1-z^{k_2})\cdots(1-z^{k_m})},
$$
here  $Q_n(z)=1+z+z^2+\ldots+z^{n-1},$  and  $k_i$ are natural numbers.
\end{lm}
As example, let us calculate the Poincar\'e series   $\mathcal{PD}_{(1,2,3)}.$ We have $d^*=3$  and 
$$
f_{(1,2,3)}(t,z)={\frac {1}{ \left( 1-t{z}^{4} \right) ^{2} \left( 1-t{z}^{2}
 \right) ^{2} \left( 1-t{z}^{5} \right)  \left( 1-t{z}^{3} \right) 
 \left( 1-tz \right)  \left( 1-t{z}^{6} \right)  \left( 1-t \right) }}.
$$
The decomposition  $f_{(1,2,3)}(t,z)$ into partial fractions yelds:
$$
\begin{array}{l}
\displaystyle f_{(1,2,3)}(t,z)={\frac {A_{{0,1}}(z)}{1-t}}+{\frac {A_{{1,1}}(z)}{1-tz}}+{\frac {A_{{2,1}}(z)}{
1-t{z}^{2}}}+{\frac {A_{{2,2}}(z)}{ \left( 1-t{z}^{2} \right) ^{2}}}+{
\frac {A_{{3,1}}(z)}{1-t{z}^{3}}}+{\frac {A_{{4,1}}(z)}{1-t{z}^{4}}}+
\\
\\
\displaystyle +
{\frac 
{A_{{4,2}}(z)}{ \left( 1-t{z}^{4} \right) ^{2}}}+
{\frac {A_{{5,1}}(z)}{1-t{z
}^{5}}}+{\frac {A_{{6,1}}(z)}{1-t{z}^{6}}}.
\end{array}
$$
By using Lemma  3.1 we have 
$$
\begin{array}{l}
\displaystyle \mathcal{PD}_{(1,2,3)}(z)=\Psi_{1,3}\left( (1+z) f_{(1,2,3)}(t,z) \right)=\\
\\
\displaystyle  =\Psi_{1,3}\left({\frac {(1+z)A_{{0,1}}(z)}{1-t}}\right) +\Psi_{1,3}\left({\frac {(1+z)A_{{1,1}}(z)}{1-tz}}\right)+\\
\\
\displaystyle +\Psi_{1,3}\left({\frac {(1+z) A_{{2,1}}}{
1-t{z}^{2}}}\right)+\Psi_{1,3}\left({\frac {(1+z) A_{{2,2}}(z)}{ \left( 1-t{z}^{2} \right) ^{2}}}\right)+\Psi_{1,3}\left({
\frac {(1+z) A_{{3,1}}(z)}{1-t{z}^{3}}}\right)=\\
\\
\displaystyle =\varphi_{3}\left((1+z)A_{0,1}(z)\right) +\varphi_{2}\left((1+z)A_{1,1}(z)\right)+\varphi_{1}\left((1+z)A_{{2,1}}(z) \right)+\\
\\
\displaystyle + \left(z \varphi_{1}\left((1+z)A_{{2,2}}(z) \right)\right)'_z+A_{{3,1}}(0).
\end{array}
$$
Now
$$
\begin{array}{l}
\displaystyle  A_{0,1}(z)=\lim_{t\to 1}\left( f_{(1,2,3)}(t,z)(1-t)\right)=\\
\\
\displaystyle  ={\frac {1}{ \left( 1-{z}^{4} \right) ^{2} \left( 1-{z}^{2} \right) ^
{2} \left( 1-{z}^{5} \right)  \left( 1-{z}^{3} \right)  \left( 1-z
 \right)  \left( 1-{z}^{6} \right) }}.
\end{array}
$$
and 
$$
\begin{array}{l}
\displaystyle \varphi_3((1+z)A_{0,1}(z))=\\
\\
\displaystyle={\frac {2\,{z}^{11}+7\,{z}^{10}+14\,{z}^{9}+29\,{z}^{8}+34\,{z}^{7}+42
\,{z}^{6}+42\,{z}^{5}+33\,{z}^{4}+21\,{z}^{3}+14\,{z}^{2}+4\,z+1}{
 \left( 1-{z}^{5} \right)  \left( 1-z \right) ^{3} \left( 1-{z}^{4} \right) ^{2} \left( 1-{z}^{2}\right) ^{2}}}.
\end{array}
$$
As above we obtain

$$
\begin{array}{l}
\displaystyle  A_{1,1}(z)=\lim_{t\to z^{-1}}\left( f_{(1,2,3)}(t,z)(1-t z)\right)=\\
\\
\displaystyle  ={\frac {z}{ \left( 1-{z}^{3} \right) ^{2} \left( 1-z \right) ^{2}
 \left( 1-{z}^{4} \right)  \left( 1-{z}^{2} \right)  \left( 1-{z}^{5}
 \right)(z-1) }},
\end{array}
$$
$$
\begin{array}{l}
\displaystyle \varphi_2((1+z)A_{1,1}(z))=-{\frac {z \left( 4+13\,{z}^{2}+6\,z+6\,{z}^{6}+{z}^{7}+13\,{z}^{4}+9
\,{z}^{5}+12\,{z}^{3} \right) }{ \left( 1-z^2 \right)  \left( 1-z^5 \right)  \left(1-z^3 \right) ^{2} \left( 1-z
 \right) ^{4}}}.
\end{array}
$$
$$
\begin{array}{l}
\displaystyle  A_{2,1}(z)=-\frac{1}{z^2}\lim_{t\to z^{-2}}\left( f_{(1,2,3)}(t,z)(1-t z^2)^2\right)'_t=\\
\\
\displaystyle  =-{\frac {{z}^{3} \left( 5\,{z}^{6}+5\,{z}^{5}+6\,{z}^{4}+2\,{z}^{3}-{z
}^{2}-2\,z-2 \right) }{ \left( 1-{z}^{4} \right) ^{2} \left( 1-z
 \right)  \left( 1-{z}^{3} \right) ^{2} \left( 1-z^2 \right) ^{3}}},\\
\\
\displaystyle \varphi_1((1+z) A_{2,1}(z))=-{\frac {{z}^{3} \left( 5\,{z}^{6}+5\,{z}^{5}+6\,{z}^{4}+2\,{z}^{3}-{z
}^{2}-2\,z-2 \right) }{ \left( 1-{z}^{4} \right) ^{2} \left( 1-z
 \right)^2  \left( 1-{z}^{3} \right) ^{2} \left( 1-z^2 \right) ^{2}}}.\\
\\

\displaystyle  A_{2,2}(z)=\lim_{t\to z^{-2}}\left( f_{(1,2,3)}(t,z)(1-t z)^2\right) ={\frac {{z}^{3}}{ \left( 1-{z}^{4} \right)  \left( 1-z \right) ^{3}
 \left( 1-{z}^{3} \right)  \left( 1-{z}^{2} \right) ^{2}}},\\
\\
\displaystyle \left(z \varphi_{1}\left((1+z) A_{{2,2}}(z) \right)\right)'_z=\left(z (1+z)  A_{{2,2}}(z) \right)'_z=\\
\\
\displaystyle ={\frac {{z}^{3} \left( 10\,{z}^{6}+13\,{z}^{5}+20\,{z}^{4}+16\,{z}^{3}
+14\,{z}^{2}+7\,z+4 \right) }{ \left( 1-{z}^{2} \right) ^{2} \left( 1-{z}^{3} \right) ^{2} \left( 1-z \right) ^{2} \left(1-z^4\right)^2  }}.
\end{array}
$$
At last 
$$
\begin{array}{l}
\displaystyle  A_{3,1}(z)=\lim_{t\to z^{-3}}\left( f_{(1,2,3)}(t,z)(1-t z^3)\right) ={\frac {{z}^{7}}{ \left( 1-{z}^{3} \right) ^{2} \left( 1-z \right) ^
{5} \left( 1-{z}^{2} \right) }}.
\end{array}
$$
Thus $ A_{3,1}(0)=0.$

After summation and simplification we obtain the explicit expression for the Poincar\'e series 
$$
\begin{array}{l}
\displaystyle  \mathcal{PD}_{(1,2,3)}(z)={\frac {p_{(1,2,3)}(z)}{\left( 1-{z}^{4} \right) ^{2} \left( 1-z \right) ^{2} \left( 1-{z}^{
2} \right)  \left( 1-{z}^{3} \right) ^{2} \left( 1-{z}^{5}\right)
}},
\end{array}
$$
where 
$$
\begin{array}{l}
\displaystyle p_{1,2,3}(z)={z}^{14}+{z}^{13}+6\,{z}^{12}+12\,{z}^{11}+20\,{z}^{10}+29\,{z
}^{9}+35\,{z}^{8}+39\,{z}^{7}+35\,{z}^{6}+29\,{z}^{5}+\\
\\
\displaystyle +20\,{z}^{4}+12\,
{z}^{3}+6\,{z}^{2}+z+1.
\end{array}
$$

The following  Poincar\'e series obtained 
by using the explicit formulas of Theorem 3.2 and Theorem 3.3 

$$
 \mathcal{PD}_{(1,1)}(z)={\frac {1}{ \left( 1-z \right) ^{2} \left( 1-{z}^{2} \right) }},  \mathcal{PD}_{(1,1,1)}(z)={\frac {1-{z}^{3}}{ \left( 1-z \right) ^{3} \left( 1-{z}^{2}
 \right) ^{3}}},
$$
$$
 \mathcal{PD}_{(1,1,1,1)}(z)={\frac {{z}^{4}+2\,{z}^{3}+4\,{z}^{2}+2\,z+1}{ \left( 1-z \right) ^{2
} \left( 1-{z}^{2} \right) ^{5}}},
$$
$$
 \mathcal{PD}_{(1,1,1,1,1)}(z)={\frac {{z}^{6}+3\,{z}^{5}+9\,{z}^{4}+9\,{z}^{3}+9\,{z}^{2}+3\,z+1}{
 \left( 1-z \right) ^{2} \left( 1-{z}^{2} \right) ^{7}}},
$$
$$
 \mathcal{PD}_{(1,1,1,1,1,1)}(z)={\frac {{z}^{8}+4\,{z}^{7}+16\,{z}^{6}+24\,{z}^{5}+36\,{z}^{4}+24\,{z
}^{3}+16\,{z}^{2}+4\,z+1}{ \left( 1-z \right) ^{2} \left( 1-{z}^{2}
 \right) ^{9}}}
$$
$$
 \mathcal{PD}_{(1,1,1,1,1,1,1)}(z)={\frac {p_7(z)}{ \left( 1-z
 \right) ^{2} \left( 1-{z}^{2}\right) ^{11}}}
$$
$$
p_7(z)={z}^{10}+5\,{z}^{9}+25\,{z}^{8}+50\,{z}^{7}+100\,{z}^{6}+100
\,{z}^{5}+100\,{z}^{4}+50\,{z}^{3}+25\,{z}^{2}+5\,z+1.
$$

$$
 \mathcal{PD}_{(2,2,2)}(z)={\frac {1+4\,{z}^{2}+{z}^{4}}{ \left( 1-z \right) ^{3} \left( 1-z^2
 \right) ^{5}}},  \mathcal{PD}_{(2,2,2,2)}(z)={\frac {1+9\,{z}^{2}+9\,{z}^{4}+{z}^{6}}{ \left( 1-z \right) ^{4} \left( 1-z^2
 \right) ^{7}}}
$$
$$
\begin{array}{l}
 \displaystyle \mathcal{PD}_{(2,2,2,2,2)}(z)={\frac {1+16\,{z}^{2}+36\,{z}^{4}+16\,{z}^{6}+{z}^{8}}{\left( 1-z \right) ^{5} \left( 1-z^2
 \right) ^{9}}},\\
\\
 \displaystyle \mathcal{PD}_{(2,2,2,2,2,2)}(z)={\frac {{z}^{10}+25\,{z}^{8}+100\,{z}^{6}+100\,{z}^{4}+25\,{z}^{2}+1}
{ \left( 1-z \right) ^{6} \left( 1-{z}^{2} \right) ^{11}}},\\
\\
 \displaystyle \mathcal{PD}_{(2,2,2,2,2,2,2)}(z)={\frac {{z}^{12}+36\,{z}^{10}+225\,{z}^{8}+400\,{z}^{6}+225\,{z}^{4}+
36\,{z}^{2}+1}{ \left( z-1 \right) ^{7} \left( 1-{z}^{2} \right) ^{13
}}}.
\end{array}
$$

By using Maple we computed the Poincar\'e series up to $n=30.$ The case $n=2$ agrees to  the results of the paper \cite{BC2}.

\end{document}